\DeclareMathAlphabet{\curly}{U}{rsfs}{m}{n}  
\theoremstyle{remark}
\newtheorem{remark}{Remark}
\theoremstyle{plain}
\newtheorem{lem}{Lemma}[section]
\newtheorem{thm}{Theorem}
\numberwithin{equation}{section}
\newenvironment{romenumerate}{\begin{enumerate}
 }{\end{enumerate}}
\newcommand{\be}{\begin{equation}}
\newcommand{\ee}{\end{equation}}
\newcommand{\benn}{\begin{equation*}}
\newcommand{\eenn}{\end{equation*}}
\newcommand{\bal}{\begin{align*}}
\newcommand{\ea}{\end{align*}}
\newcommand{\eal}{\ensuremath{\end{align*}}}
\newcommand{\bea}{\begin{eqnarray}}
\newcommand{\eea}{\end{eqnarray}}
\renewcommand{\a}{\ensuremath{\alpha}}
\renewcommand{\b}{\ensuremath{\beta}}
\newcommand{\eps}{\ensuremath{\varepsilon}}
\renewcommand{\(}{\left(}
\renewcommand{\)}{\right)}
\newcommand{\pfrac}[2]{\left(\frac{#1}{#2}\right)}
\newcommand{\fl}[1]{{\ensuremath{\left\lfloor {#1} \right\rfloor}}}
\renewcommand{\le}{\leqslant}
\renewcommand{\leq}{\leqslant}
\renewcommand{\ge}{\geqslant}
\renewcommand{\geq}{\geqslant}
\begin{document}

\title{A problem of Ramanujan, Erd\H os and K\'atai on the iterated divisor function}
\author[Y. Buttkewitz]{Yvonne Buttkewitz}
\author[C. Elsholtz]{Christian Elsholtz}
\author[K. Ford]{Kevin Ford}
\author[J.-C. Schlage-Puchta]{Jan-Christoph Schlage-Puchta}

\date{\today}

\address{{\bf Y.~Buttkewitz}: Department of Mathematics\\
Royal Holloway, University of London\\
Egham, Surrey TW20 OEX, U.K.}
\email{leros@t-online.de}

\address{{\bf C.~Elsholtz}: Institut f\"{u}r Mathematik A,
Technische Universit\"{a}t Graz,
Steyrergasse 30, A-8010 Graz, Austria}
\email{elsholtz@math.tugraz.at}

\address{{\bf K.~Ford}: Department of Mathematics, 1409 West Green Street, University
of Illinois at Urbana-Champaign, Urbana, IL 61801, USA}
\email{ford@math.uiuc.edu}

\address{{\bf J.-C.~Schlage-Puchta}: Department of Mathematics,
Building S22, Ghent University, 9000 Gent, Belgium}
\email{jcsp@cage.ugent.be}

\begin{abstract}
We determine asymptotically the maximal order of $\log d(d(n))$, 
where $d(n)$ is the
 number of positive divisors of $n$.  
This solves a problem first put forth by Ramanujan in 1915.
\end{abstract}

\thanks{ The work of Y.~B. and C.~E. was supported
by the  German Research Council (DFG-Grant-Number 
BU2488/1-3). \\
K.~F. was supported by National Science Foundation
grant DMS-0901339.}
\maketitle


\section{Introduction}
Let $d(n)$ denote the number of positive divisors of an integer $n$.  The 
extreme large values of $d(n)$ were studied by Wigert 
\cite{Wigert:1907}, (see also \cite[Theorem 432]{HardyWright}).
Wigert proved that
\[
m_1(x) := \max_{n\le x} \log d(n) \sim (\log 2){\frac{\log x}{\log_2 x}}.
\]
Here $\log_k x$ denotes the $k$-th iterate of the logarithm.
The lower bound comes from considering
integers of the form $N_k=p_1\cdots p_k$, where $p_j$ denotes 
the $j$th smallest prime.  Here $d(N_k)=2^k$, while $\log N_k \sim k \log k$
by the prime number theorem.  In his  seminal 1915 paper on highly composite
numbers \cite{Ramanujan:1915}, Ramanujan gave a more precise asymptotic for
$m_1(x)$.
At the very end of his paper, Ramanujan posed the problem
of finding the extreme large values of $d(d(n))$.
By considering integers of the form 
\be\label{Rama1}
2^1\cdot 3^2\cdot 5^4\cdots p_k^{p_k-1},
\ee
Ramanujan showed that
\[
m_2(x):= \max_{n\le x} \log d(d(n)) \ge (\sqrt{2}\log 4 + o(1)) \frac{\sqrt{\log x}}{\log_2 x}.
\]

 The problem of finding the order of $m_2(x)$ has been mentioned in 
 Erd\H{o}s \cite{Erdoes}, Ivi\'{c} \cite{Ivic:1995},
and has been mentioned by Ivi\'{c} in problem sessions
in Ottawa \cite{Ivic:1999} and Oberwolfach.

Erd\H{o}s and K\'atai \cite{Erdoes:1969} showed $m_2(x) = (\log x)^{1/2}(\log_2 x)^{O(1)}$
(see (4.1) on p. 270 of \cite{Erdoes:1969}).
Twenty years later
Erd\H{o}s and Ivi\'{c} \cite{Erdoes:1989} improved the upper bound to
\[ 
m_2(x) \ll \pfrac{\log x \log_2 x}{\log_3 x}^{1/2}.
\]
Smati \cite{Smati:2005, Smati:2008} gave a further improvement
\[
m_2(x) \ll \sqrt{\log x},
\]
the best estimate known to date.
Constructions similar to Ramanujan's seem rather natural, and one might
expect that $m_2(x) \ll \frac{\sqrt{\log x}}{\log_2 x}$.
This is indeed the case, as we now show.
More precisely, we prove an asymptotic formula for $m_2(x)$ with an
error term.

\begin{thm}\label{m2}
 We have 
\[
m_2(x) =  \frac{\sqrt{\log x}}{\log_2 x} \( c + O\pfrac{\log_3 x}{\log_2 x} \),
\]
where 
\[
c =\Bigg( 8  \sum_{j=1}^\infty \log^2 (1+1/j) \Bigg)^{1/2} = 2.7959802335\ldots.
\]
\end{thm}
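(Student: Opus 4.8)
The plan is to exploit the identity $\log d(d(n)) = \sum_{q}\log\bigl(1+v_q(d(n))\bigr)$ (sum over primes $q$, with $v_q(m)$ the exponent of $q$ in $m$) together with the ``layer'' decomposition $\log(1+\gamma) = \sum_{j=1}^{\gamma}\log(1+1/j)$. Writing $n=\prod_i p_i^{a_i}$ with $p_i$ the $i$-th prime and (WLOG) $a_1\ge a_2\ge\cdots$, we have $v_q(d(n))=\sum_i v_q(a_i+1)$, hence
\[
\log d(d(n)) = \sum_{j\ge 1}\log(1+1/j)\,U_j,\qquad U_j := \#\{q\ \text{prime}:\ q^j \mid d(n)\}.
\]
Both bounds then reduce to the same thing: show $U_j\le\pi(P_j)$ (with equality in a construction) for some non-increasing $P_1\ge P_2\ge\cdots$ with $\tfrac12\sum_j P_j^2\le(1+o(1))\log x$, and optimise $\sum_j\log(1+1/j)\pi(P_j)$ under that constraint. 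By Cauchy--Schwarz the optimum is attained when $P_j$ is proportional to $\log(1+1/j)$, and since $\pi(P_j)\sim P_j/\log P_j$ with $\log P_j=\tfrac12\log_2 x+O(\log_3 x)$ throughout the dominant range $j=O(1)$, one is led to $\sum_j\log(1+1/j)\pi(P_j)\approx \tfrac{2}{\log_2 x}\bigl(\sum_j\log^2(1+1/j)\bigr)^{1/2}(2\log x)^{1/2} = \bigl(8\sum_j\log^2(1+1/j)\bigr)^{1/2}\sqrt{\log x}/\log_2 x$, which is exactly $c\sqrt{\log x}/\log_2 x$.

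For the lower bound I would make this explicit. Fix $c'\asymp\sqrt{\log x}$, put $y_j:=c'\log(1+1/j)$, and let $n$ be the integer whose multiset of prime–exponents consists, for each prime $q$, of the value $q-1$ taken with multiplicity $\gamma_q:=\#\{j\ge 1:\ y_j\ge q\}$, arranged in non-increasing order on $p_1,p_2,\dots$. Then $d(n)=\prod_i(a_i+1)=\prod_q q^{\gamma_q}$, so $d(d(n))=\prod_q(1+\gamma_q)$ and $\log d(d(n))=\sum_j\log(1+1/j)\,\pi(y_j)$ by the layer identity (since $\#\{q:\gamma_q\ge j\}=\#\{q\le y_j\}$). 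Grouping the exponents $q-1$ by their value and using the prime number theorem gives $\log n=\sum_i a_i\log p_i=\bigl(1+O(\log_3 x/\log_2 x)\bigr)\tfrac12\sum_j y_j^2$, the relative error coming from $\log p_m=\log m+\log_2 m+O(\log_2 m/\log m)$ at $m\asymp\sqrt{\log x}$. Since $\tfrac12\sum_j y_j^2=\tfrac12 c'^2(\sigma+o(1))$ with $\sigma:=\sum_{j\ge1}\log^2(1+1/j)$, choosing $c'=(1-o(1))\sqrt{2\log x/\sigma}$ makes $n\le x$, and then the first paragraph's estimate yields $\log d(d(n))=\bigl(1+O(\log_3 x/\log_2 x)\bigr)\,c\,\sqrt{\log x}/\log_2 x$.

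For the upper bound the same optimisation must be reached from an arbitrary $n=\prod_i p_i^{a_i}\le x$ (WLOG using the first $r$ primes with exponents decreasing). When every $a_i+1$ is prime this is clean: $U_j$ is the number of values taken at least $j$ times by the $a_i+1$, the multiset of exponents is $\biguplus_j\{q-1:\gamma_q\ge j\}$, and replacing each layer by the $U_j$ \emph{smallest} primes minus $1$ can only decrease $\sum_i a_{(i)}\log p_i=\log n$; by the prime–number–theorem computation above that idealised cost is $\ge(1-o(1))\tfrac12 P_j^2$ with $\pi(P_j)=U_j$, so $\tfrac12\sum_j P_j^2\le(1+o(1))\log x$ and Cauchy--Schwarz finishes exactly as before. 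The hard part will be the general case: one must control the effect of composite, and of prime–power, values of $a_i+1$. The natural device is an exchange argument — if $a_i+1=\beta\gamma$ with $\beta,\gamma\ge 2$ then replacing $p_i^{\beta\gamma-1}$ by $p_i^{\beta-1}p_{\mathrm{new}}^{\gamma-1}$ leaves $d(n)$ (hence $d(d(n))$) unchanged and decreases $n$ unless $p_{\mathrm{new}}>p_i^{\beta}$ — but the exceptional configurations, a few of the smallest primes $p_i$ carrying composite $a_i+1$, are genuinely present and, under crude estimates, already contribute at the full order $\sqrt{\log x}/\log_2 x$; they cannot be split off from the main term and must instead be folded into the layer optimisation. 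Showing that such configurations still cannot beat the Cauchy--Schwarz bound is, I expect, the technical heart of the proof. Finally, the error term $O(\log_3 x/\log_2 x)$ in both directions is forced by the relative error in $\log p_m\approx\log m$ at $m\asymp\sqrt{\log x}$, so the stated shape of the theorem is the natural target.
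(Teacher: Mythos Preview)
Your lower-bound construction and the Cauchy--Schwarz layer framework match the paper's: your $n$ is the paper's construction with $c'=t/\log 2$, and your inequality is its key lemma $\sum_i\log(a_i+1)\le(c/2)\bigl(\sum_i ia_i\bigr)^{1/2}$. The genuine gap is exactly where you locate it, in the upper bound, but your diagnosis that the composite-exponent contribution ``cannot be split off from the main term and must instead be folded into the layer optimisation'' is wrong; the paper \emph{does} split it off, and this is the missing idea. The device is to shift attention from $n$ to $N=d(n)$ and to $m_N:=\min\{m:d(m)=N\}\le n$, and then to factor $N=N'N''$ according to multiplicity: $N'$ collects the primes $q$ with $v_q(N)>(\log_2 n)^6$, and $N''$ the rest.

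For $N''=\prod_j q_j^{a_j}$ one passes to $m_{N''}=\prod_{i\le r} p_i^{\alpha_i}$ (which is $\le m_N\le n$ since $N''\mid N$). Your exchange observation, in the form ``$p_i>p_{r+1}^{1/2^k}$ forces $\Omega(\alpha_i+1)\le k$'', makes $\alpha_i+1$ prime for all $i>\sqrt r$; since every $a_j\le(\log_2 n)^6$, the $O(\sqrt r)$ exceptional indices are absorbable, and one obtains $\log n\ge \log m_{N''}\ge(1+o(1))\tfrac14(\log_2 n)^2\sum_j ja_j$, after which the clean Cauchy--Schwarz bound gives $\log d(N'')\le(c+o(1))\sqrt{\log n}/\log_2 n$. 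For $N'=\prod_j u_j^{b_j}$ one needs a second, sharper form of the layer inequality: if every $b_j\ge A$ then
\[
\sum_j\log(b_j+1)\le\Bigl(\tfrac{1+\log^2(A+1)}{A}\sum_j jb_j\Bigr)^{1/2}.
\]
Combined with the crude estimate $\sum_j jb_j\ll(\log_3 n)\log m_{N'}\le(\log_3 n)\log n$ (writing $m_{N'}=\prod p_i^{\beta_i}$, each $\Omega(\beta_i+1)\ll\log_3 n$ by the exchange lemma, so each $u_j$ divides $\gg b_j/\log_3 n$ of the $\beta_i+1$, whence $\log m_{N'}\gg\sum_i\beta_i\gg\sum_j u_jb_j/\log_3 n$), this yields $\log d(N')\ll(\log n)^{1/2}(\log_3 n)^{3/2}/(\log_2 n)^3$, a genuine lower-order term. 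The idea you were missing is to make the split in the factorisation of $d(n)$ by \emph{multiplicity} rather than in the factorisation of $n$ by \emph{index}: once the high-multiplicity primes of $d(n)$ are isolated, the extra factor $A^{-1/2}$ in the sharpened Cauchy--Schwarz kills their contribution outright.
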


In particular, Theorem \ref{m2} implies that
\[
 \limsup_{n\to \infty} \frac{\log d(d(n)) \log_2 n}{\sqrt{\log n}} = c.
\]
Ramanujan's examples \eqref{Rama1} are seen to be suboptimal
with respect to the constant $c$, since $\sqrt{2}\log 4 = 1.9605\ldots$.

There is a closely related problem, to estimate the extreme values of $\omega(d(n))$, 
where $\omega(n)$ is the number of distinct prime factors of $n$. 
In fact, both  Erd\H{o}s and Ivi\'{c} \cite{Erdoes:1989}
and Smati \cite{Smati:2008} obtained upper bounds for $d(d(n))$ by first bounding
$\omega(d(n))$ and then using the elementary inequality $\log d(m)\ll (\log_2 m)
\omega(m)$ (see, e.g., Lemme 3.3 of \cite{Smati:2005} or Lemma \ref{domega} below).
 For this
problem, Ramanujan's examples \eqref{Rama1} are essentially optimal,
providing the true order and constant in
the asymptotic for $w(x) = \max_{n\le x} \omega(d(n))$.

\begin{thm}\label{wn}
We have
\[
 w(x) = \frac{\sqrt{\log x}}{\log_2 x} \( \sqrt{8} + O\pfrac{\log_3 x}{\log_2 x} \),
\] 
\end{thm}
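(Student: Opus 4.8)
For the lower bound I would return to Ramanujan's construction \eqref{Rama1} in the cleanest form: set $N=\prod_{p\le y}p^{p-1}$, so that $d(N)=\prod_{p\le y}p$ and $\omega(d(N))=\pi(y)$, while $\log N=\bigl(\sum_{p\le y}p\log p\bigr)-\theta(y)$ with $\theta(y)=\sum_{p\le y}\log p$. Partial summation from $\theta(y)=y+O(y/\log y)$ gives $\sum_{p\le y}p\log p=\tfrac{y^2}{2}+O(y^2/\log y)$, hence $\log N=\tfrac{y^2}{2}(1+O(1/\log y))$. Choosing $y$ as large as possible subject to $N\le x$ yields $y=\sqrt{2\log x}\,(1+O(1/\log_2 x))$, and since $\log y=\tfrac12\log_2 x+O(1)$ we get $\pi(y)=\tfrac{y}{\log y}(1+O(1/\log y))=\sqrt8\,\frac{\sqrt{\log x}}{\log_2 x}(1+O(1/\log_2 x))$, which is already inside the claimed error term on this side.

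For the upper bound, fix $n=\prod_i q_i^{a_i}\le x$ with the $q_i$ distinct primes, let $A$ be the set of distinct exponents, $m=|A|$, and list $A$ as $a^{(1)}>a^{(2)}>\cdots>a^{(m)}$. Two observations drive the proof. First, since $d(n)=\prod_a(a+1)^{c_a}$ depends on the exponents only through $A$, we have $\omega(d(n))=\omega\bigl(\prod_{a\in A}(a+1)\bigr)$. Second, pairing the largest exponents with the smallest prime bases gives $\log x\ge\log n\ge\sum_{k=1}^m a^{(k)}\log p_k$, where $p_k$ is the $k$th prime. So it suffices to solve the arithmetic extremal problem: among finite $A\subset\ZZ_{\ge2}$ with $\sum_k a^{(k)}\log p_k\le\log x$, how large can $w:=\omega\bigl(\prod_{a\in A}(a+1)\bigr)$ be? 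The model configuration is $A=\{p-1:p\in\mathcal P\}$ for a set $\mathcal P$ of $w$ primes; then $\prod_{a\in A}(a+1)=\prod_{p\in\mathcal P}p$ has exactly $w$ prime factors, and with $\mathcal P=\{p_1,\dots,p_w\}$ the cost is $C(w):=\sum_{j=1}^w(p_{w+1-j}-1)\log p_j$. Using $p_k\sim k\log k$ one finds $C(w)\sim\tfrac12 w^2(\log w)^2$; solving $C(w)\le\log x$ with $\log w=\tfrac12\log_2 x+O(\log_3 x)$ returns exactly $w=\sqrt8\,\frac{\sqrt{\log x}}{\log_2 x}(1+O(\log_3 x/\log_2 x))$, matching the lower bound.

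The heart of the matter — and the step I expect to be the main obstacle — is proving that no configuration $A$ does essentially better than this model, i.e.\ that factoring $a+1$ into several primes is wasteful. Quantitatively, if $a+1$ has $s$ distinct prime factors then $a+1\ge N_s=p_1\cdots p_s=e^{(1+o(1))s\log s}$, so a single exponent that injects $s$ primes into $d(n)$ must have size growing like the primorial $N_s$, whereas distributing those $s$ primes over $s$ separate exponents $p-1$ placed on $s$ distinct small bases costs only $O\bigl(\sum(p-1)\log(\mathrm{base})\bigr)$, which is polynomially smaller in $s$. I would make this precise as follows: discard the negligibly many prime divisors of $d(n)$ below a slowly growing threshold (there are $o(\sqrt{\log x}/\log_2 x)$ of them); for each remaining prime divisor $P_1<\cdots<P_{w'}$ of $d(n)$ choose an exponent it divides and group the $P_l$ by the chosen exponent, so that each such exponent is $\ge\prod_{P\text{ in its group}}P-1$; re-sort these distinct exponents and pair them optimally with $p_1,p_2,\dots$; and verify via an exchange argument that refining every group to singletons (introducing the requisite extra small prime bases) alters $\sum_k a^{(k)}\log p_k$ by at most a factor $1+O(\log_3 x/\log_2 x)$. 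Combined with the estimates $p_k=k\log k\,(1+o(1))$ and $\theta(p_s)=s\log s\,(1+o(1))$ and the evaluation of $C(w)$ above, this yields $\log x\ge\log n\ge(1-O(\log_3 x/\log_2 x))\,C(w)=(\tfrac18-O(\log_3 x/\log_2 x))\,w^2(\log_2 x)^2$, hence the stated upper bound. The genuinely delicate point throughout is propagating the error terms — from the primorial and prime‑counting estimates and from the rearrangement — so as to land precisely the advertised $O(\log_3 x/\log_2 x)$.
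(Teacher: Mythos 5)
Your lower bound is correct and is essentially the paper's own construction ($n=\prod_{p\le y}p^{p-1}$ with the same prime-sum asymptotics), so no issue there. The gap is in the upper bound, exactly at the step you flag as the heart of the matter, and the specific claim you propose to verify there is false as stated. You want ``refining every group to singletons alters $\sum_k a^{(k)}\log p_k$ by at most a factor $1+O(\log_3 x/\log_2 x)$,'' but refinement increases the number of exponents, so after re-sorting every untouched exponent can be pushed to a more expensive slot: an exponent of size $\asymp \log x/\log 2$ sitting at base $2$ contributes $\asymp\log x$ originally but up to $\asymp \log x\,\log_2 x$ after re-pairing, so the total can grow by a factor $\asymp\log_2 x$, not $1+o(1)$. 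Moreover, the statement you actually need is not a comparison of the two configurations but a direct lower bound $\sum_k a^{(k)}\log p_k\ge\bigl(1-O(\log_3 x/\log_2 x)\bigr)C(w')$, and the dangerous configurations are precisely those your sketch does not control: multi-prime groups placed in the cheap low-index slots, where the unit cost $\log p_k$ can be $O(1)$ instead of the typical $\tfrac12\log_2 x$, a potential saving of a factor $\asymp\log_2 x$ per unit of exponent. To rule these out you must use the multiplicative penalty (a group containing two surviving primes of size $\approx w\log w$ forces an exponent $\ge(w\log w)^2$, whose cost even at base $2$ already exceeds the whole budget $\sim\tfrac12 w^2\log^2 w$), choose the discard threshold large compared with $\log_2 x$ so that ``product versus sum'' wins inside every group, and count how many primes can be covered in slots with $\log p_k=o(\log_2 x)$. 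None of this accounting is in the proposal, and the one-line exchange claim cannot substitute for it; so as written the upper bound is not proved.

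For comparison, the paper avoids this extremal/rearrangement problem entirely. It splits $n=n_1n_2$ according to whether the base prime exceeds $P=\sqrt{\log n}/\log_2 n$; the small-base part is disposed of by Lemma \ref{domega} together with $\omega(u)\ll\log u/\log_2 u$, and for the large-base part no optimal pairing is needed: with $s=\omega(d(n_1))$, Lemma \ref{sump} gives $\sum_i a_i\ge\sum_i\sum_{p\mid a_i+1}(p-1)\ge\sum_{i\le s}(p_i-1)\sim\tfrac12 s^2\log s$, while every unit of exponent in $n_1$ costs at least $\log P\sim\tfrac12\log_2 n$ in $\log n$; these two facts alone yield $s\le(\sqrt8+O(\log_3 n/\log_2 n))\sqrt{\log n}/\log_2 n$. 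If you want to salvage your route, you must either carry out the missing accounting for cheap slots and multi-prime groups, or adopt the paper's base-size splitting, which makes the problematic configurations irrelevant from the start.
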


Previously, Erd\H{o}s and Ivi\'{c} \cite{Erdoes:1989} had shown
\[
 w(x) \ll \pfrac{\log x \log_3 x}{\log_2 x}^{1/2},
\]
and later Smati \cite{Smati:2005}
found the true order $w(x) \ll \frac{\sqrt{\log x}}{\log_2 x}$.

%
\section{The lower bound in Theorem \ref{m2}}\label{lower}
%

{\bf Notation and basic prime number estimates.}
Throughout, we make use of the asymptotic
\be\label{pj}
p_j = j(\log j+ \log_2 j + O(1)),
\ee
which is a simple consequence of the prime number theorem with error term
$\pi(x)=\frac{x}{\log x}+O(\frac{x}{\log^2 x})$.  Here $\pi(x)$ is the number of
primes which are $\le x$.  We also denote by $\Omega(n)$ the number of prime power
divisors of $n$.

\begin{proof}[Proof of the lower bound in Theorem \ref{m2}]
Let $x$ be large and define $\eps = 10 \frac{\log_3 x}{\log_2 x}.$
Let 
\be\label{ai}
t=\fl{ \( \frac{8\log 2}{c}-\eps\) \frac{\sqrt{\log x}}{\log_2 x}}, 
\qquad a_i=\fl{\frac{1}{2^{i/t}-1}} \quad (1\le i\le t),
\ee
and let
\[
 n=(p_1 \cdots p_{a_1})^{p_1-1}(p_{a_1+1}\cdots p_{a_1+a_2})^{p_2-1} \cdots
(p_{a_1+\cdots+a_{t-1}+1} \cdots p_{a_1+\cdots+a_t})^{p_t-1}.
\]
The Taylor expansion of $\exp ( \frac{\log 2}{t} )$ shows that
$a_1=\lfloor (2^{1/t}-1)^{-1} \rfloor = t/\log 2+O(1)$.
By \eqref{ai}, for every positive integer $j$, 
there are $y_j :=\lfloor \frac{\log (1+1/j)}{\log 2} t \rfloor$ indices $i$
with $a_i \ge j$.  Also, $a_1+\cdots+a_t\ll t\log t$.
Using \eqref{pj}, we have $\log p_{a_1+\cdots+a_i}\le \log t +2\log_2 t+O(1)$, hence 
\[
 \log n \le \sum_{i=1}^t a_i (p_i-1) \log p_{a_1+\cdots+a_i} \le
\(\log^2 t + 3(\log_2 t)\log t + O(\log t)\)\sum_{i=1}^t ia_i.
\]
From $y_j=O(t/j)$ and the definition of $c$ we obtain
\be\label{lower1}
\begin{split}
\sum_{i=1}^t ia_i = \sum_{j\le a_1} \frac{y_j(y_j+1)}{2} &= \frac12 \sum_{j=1}^\infty 
\pfrac{\log(1+1/j)}{\log 2}^2 t^2 + O(t\log t) \\
&= \frac{c^2}{16(\log 2)^2} t^2 + O(t\log t).
\end{split}\ee
From the definition of $t$, $\log t = \frac12\log_2x-\log_3 x+O(1)$ and $\log_2 t=\log_3 x + O(1)$.
Thus,
\[
 \log n \le \(1 + \frac{2\log_3 x+O(1)}{\log_2 x} \) \(1- \frac{c\eps}{8\log 2}\)^2 \log x.
\]
Hence, if $x$ is large enough, then $n\le x$.  From the definition of $n$ above,
we have $d(n)=p_1^{a_1} \cdots p_t^{a_t}$.  Therefore,
\be\label{lower2}
\begin{split}
\log m_2(x) \ge \log d(d(n)) &= \sum_{i=1}^t \log(a_i+1) 
=\sum_{j \geq 1}(y_j-y_{j+1})\log(j+1)
= \sum_{j\ge 1} y_j \log (1+1/j) \\
&= \sum_{j\le a_1}\( \frac{\log^2 (1+1/j)}{\log 2}t + O(1/j)\) \\
&= \frac{c^2}{8\log 2} t + O(\log t) \\
&= \frac{\sqrt{\log x}}{\log_2 x} \( c + O\pfrac{\log_3 x}{\log_2 x} \).
\end{split}
\ee
\end{proof}

%
\section{Proof of the upper bound in Theorem \ref{m2}}\label{upper}
%

\begin{lem}\label{2.2'}
 Let $m_N = \min\{  m: d(m)=N \}$ and write $m_N=p_1^{\a_1} \cdots p_r^{\a_r}$.  We have
\begin{romenumerate}
 \item $\a_1 \ge \cdots \ge \a_r$,
 \item $N'|N$ implies $m_{N'}\le m_N$,
 \item for each integer $k\ge 1$, if $p_j>p_{r+1}^{1/2^k}$, then $\Omega(\a_j+1)\le k$.
\end{romenumerate}
\end{lem}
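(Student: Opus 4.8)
The common thread is that $m_N$ is \emph{minimal} with $d(m_N)=N$, so for each part I would assume the conclusion fails and then exhibit some $m'$ with $d(m')=N$ but $m'<m_N$, contradicting minimality. For (i), first note that the prime support of $m_N$ must be an initial segment $\{p_1,\dots,p_r\}$ of the primes: if some prime $p_s\le p_r$ failed to divide $m_N$, then replacing the largest prime factor of $m_N$ by $p_s$, keeping the same exponent, would shrink $m_N$ while leaving the multiset of exponents --- hence $d$ --- unchanged. Granted this, if $\alpha_i<\alpha_j$ for some $i<j$, then swapping the exponents of $p_i$ and $p_j$ again preserves $d$ but multiplies $m_N$ by $(p_i/p_j)^{\alpha_j-\alpha_i}<1$; so no such inversion occurs, which is (i).

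For (ii), I would use the elementary distribution principle: if a positive integer $D$ divides a product $c_1c_2\cdots c_r$ of positive integers, then $D=\gamma_1\cdots\gamma_r$ for some integers $1\le\gamma_i\le c_i$. This follows prime-by-prime, since $v_p(D)\le\sum_i v_p(c_i)$ allows $v_p(D)$ to be split greedily among the $c_i$. Taking $D=N'$ and $c_i=\alpha_i+1$, and putting $\beta_i=\gamma_i-1\in[0,\alpha_i]$, the number $m'=p_1^{\beta_1}\cdots p_r^{\beta_r}$ divides $m_N$ and satisfies $d(m')=\prod_i(\beta_i+1)=N'$; hence $m_{N'}\le m'\le m_N$.

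Part (iii) is the heart of the lemma. Assume $\Omega(\alpha_j+1)\ge k+1$. Write $\alpha_j+1=uv$ where $v$ is prime and $\Omega(u)\ge k$, so $u\ge 2^k$ and $v\ge2$. By (i), $p_{r+1}\nmid m_N$ and $p_{r+1}\ne p_j$, so we may replace the factor $p_j^{\alpha_j}$ of $m_N$ by $p_j^{\,u-1}\,p_{r+1}^{\,v-1}$; call the result $m'$. Since $\alpha_j+1=uv$, the contribution of $p_j$ to the divisor count drops from $uv$ to $u$ while a new factor $v$ is created, so $d(m')=N$; moreover $m'/m_N=p_j^{\,u-1-\alpha_j}\,p_{r+1}^{\,v-1}=\bigl(p_{r+1}/p_j^{\,u}\bigr)^{v-1}<1$, because $v-1\ge1$ and, by hypothesis, $p_j^{\,u}\ge p_j^{\,2^k}>p_{r+1}$. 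This contradicts minimality. The main thing to get right here is the splitting $\alpha_j+1=uv$: one might be tempted to break $\alpha_j+1$ into many balanced pieces, but what is actually needed --- and what produces the exponent $2^k$ --- is to peel off just one prime factor $v$ and pour the entire cofactor $u$ (which has at least $k$ prime factors, hence is $\ge 2^k$) into the exponent of $p_j$.
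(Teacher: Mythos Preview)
Your argument is correct and matches the paper's proof essentially line for line: part (i) is the same swap argument (the paper just calls it trivial), part (ii) is the same distribution of a divisor of $\prod(\alpha_i+1)$ among the factors, and part (iii) is the same replacement $p_j^{\alpha_j}\mapsto p_j^{\,u-1}p_{r+1}^{\,v-1}$ with $\alpha_j+1=uv$, $u\ge 2^k$, $v\ge 2$. The only cosmetic difference is that you take $v$ prime while the paper allows any factorization with $a\ge 2$, $b\ge 2^k$; either choice gives the same contradiction.
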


\begin{remark}
 Using \eqref{pj} and taking $k=1$, we see
from (iii) that
if $r$ is large, then $\a_j+1$ is prime for $\sqrt{r} < j\le r$.  
Also, by (iii), $\Omega(\a_j+1)\ll\log_2 r$ for all $j$.
\end{remark}

\begin{proof}
(i) This is trivial and was observed by Ramanujan \cite[(32)]{Ramanujan:1915}.  

(ii) If $N'|N$, we can
find $\a_j'\le \a_j$ for each $j$ such that $N'=(\a_1'+1)\cdots(\a_r'+1)$,
and clearly $m_{N'} \le p_1^{\a_1'}\cdots p_r^{\a_r'} \le m_N$. 

(iii) If $p_j > p_{r+1}^{1/2^k}$ and $\Omega(\a_j+1)>k$, then there are integers $a,b$ with 
$\a_j+1=ab$, $a\ge 2$ and $b\ge 2^k$.  Letting
\[
 m^*=p_j^{b-1} p_{r+1}^{a-1} \prod_{i\ne j} p_i^{\a_i},
\]
we see that $d(m^*)=d(m_N)=N$, but
\[
 \frac{m^*}{m_N} = p_j^{b-1-\a_j} p_{r+1}^{a-1} = (p_j^{-b}p_{r+1})^{a-1} < 1,
\]
a contradiction.  
\end{proof}

\begin{lem}\label{domega}
For every $\eps>0$, and for $\omega(n)=s\ge 2$ we have 
\[
 d(n) \ll_\eps \pfrac{(2+\eps)\log n}{s\log s}^s.
\]
\end{lem}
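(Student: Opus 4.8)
The plan is to use the classical fact that, among integers $n$ with exactly $s$ distinct prime factors, the value $d(n)$ is maximised — for the purpose of comparing with $\log n$ — by taking $n$ to be a product of prime powers on the smallest primes $p_1,\dots,p_s$ with exponents that are as equal as possible. More precisely, fix $s$ and suppose $n = q_1^{b_1}\cdots q_s^{b_s}$ with distinct primes $q_i$ and $b_i \ge 1$. Then $\log n = \sum b_i \log q_i \ge \sum b_i \log p_i$, since replacing the $i$-th prime by $p_i$ (the $i$-th smallest prime) only decreases $\log n$ while keeping $d(n)$ fixed. So it suffices to bound $d(n)$ in terms of $L := \sum_{i=1}^s b_i \log p_i$ with $b_i\ge 1$.

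The heart of the matter is then an optimisation: we want to show $\prod_{i=1}^s (b_i+1)$ is small relative to $\((2+\eps)L/(s\log s)\)^s$. First I would note $\prod(b_i+1)\le \prod 2^{b_i} = 2^{b_1+\cdots+b_s}$, so writing $B = \sum b_i$ we get $d(n)\le 2^B$. Now I need a lower bound for $L$ in terms of $B$ and $s$. Using \eqref{pj}, $\log p_i = \log i + \log_2 i + O(1)$ for $i\ge 2$, so crudely $\log p_i \ge \log i - C$ for an absolute constant $C$ (with a separate harmless treatment of the bounded initial segment of $i$). The function $(b_1,\dots,b_s)\mapsto \sum b_i\log p_i$ subject to $\sum b_i = B$ is minimised by putting as much weight as possible on the small primes, but each $b_i\ge 1$; so $L \ge \sum_{i=1}^s \log p_i + (B-s)\log p_1 \ge$ (something like) $s\log s - O(s) $ when $B$ is comparable to $s$, and more generally one shows $L \gg B\log(2B/s)$ by a convexity/rearrangement argument. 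Inverting, $B \ll L/\log(2L/(s\log s))$ in the relevant range, whence $d(n)\le 2^B \le \exp\(O(L/\log(2L/(s\log s)))\)$; taking logarithms, $\log d(n) \ll L/\log(2L/(s\log s))$. A short computation — comparing $\log\((2+\eps)L/(s\log s)\)^s = s\log\((2+\eps)L/(s\log s)\)$ against $L/\log(2L/(s\log s))$ — then gives the claimed inequality, with the $\eps$ absorbing the additive $O(1)$ losses in $\log p_i$ and the difference between $\log 2$-type constants and the nominal $2$.

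The main obstacle, and the step requiring genuine care, is the optimisation over the exponent vector: one must argue cleanly that for fixed $s$ the ratio $\log d(n)\big/\log\((2+\eps)\log n/(s\log s)\)^s$ is maximised (up to acceptable error) in the ``balanced'' regime, and handle the two extreme regimes — $n$ only slightly larger than $p_1\cdots p_s$ (so the $b_i$ are nearly all $1$ and $d(n)\approx 2^s$ while $\log n\approx s\log s$, matching the bound tightly) versus $n$ very large (so $B\gg s$ and the $\log\log$ saving in the denominator is what makes the bound hold). A convexity argument (the map $b\mapsto b+1$ versus $b\mapsto \log p_i \cdot b$, or equivalently bounding $\sum\log(b_i+1)$ by its maximum under the linear constraint $\sum b_i\log p_i = L$ via Lagrange multipliers / the weighted AM–GM inequality) dispatches this, but the bookkeeping with the $O(1)$ terms from \eqref{pj} and the small-$i$ primes is where one can easily lose the constant, so that is where I would be most careful. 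Everything else — the reduction to the smallest primes, the estimate $\prod(b_i+1)\le 2^{\sum b_i}$, and the final elementary manipulation of logarithms — is routine.
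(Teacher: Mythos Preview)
Your plan has a genuine gap: the step $\prod_{i}(b_i+1)\le 2^{\sum b_i}=2^B$ is far too lossy when the exponents are unbalanced, and nothing that follows recovers the loss. Take for instance $s=2$ and $n=2^{K}\cdot 3$ with $K$ large. Then $d(n)=2(K+1)$, $\log n\sim K\log 2$, and the lemma's right-hand side is of order $K^2$, so the lemma holds comfortably. But your route gives only $d(n)\le 2^{K+1}$, which dwarfs $K^2$; the subsequent claimed bound $\log d(n)\ll L/\log(2L/(s\log s))$ is of order $K/\log K$ here, still hopelessly weaker than the required $O(\log K)$. Relatedly, the intermediate claim $L\gg B\log(2B/s)$ is false in this regime: with almost all the weight on $p_1$ one has $L\sim B\log 2$, not $B\log(2B/s)$. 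Your final paragraph gestures at ``weighted AM--GM / Lagrange multipliers'' as a fix, but that is precisely the argument you have \emph{not} carried out; the concrete route you sketch via $2^B$ does not work.

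The paper's proof avoids this by using the inequality in the opposite direction: $b_i+1\le 2b_i$ (not $b_i+1\le 2^{b_i}$), so that
\[
d(n)\le 2^s\prod_{i=1}^s b_i
     = 2^s\prod_{i=1}^s (b_i\log q_i)\,\prod_{i=1}^s(\log p_i)^{-1}
     \le \Big(\tfrac{2\log n}{s}\Big)^{s}\prod_{i=1}^s(\log p_i)^{-1},
\]
the last step by AM--GM on the quantities $b_i\log q_i$ whose sum is $\log n$. One then observes $\prod_{i\le s}\log p_i\ge (\log 2)(\log s)^{s-\pi(s)}$ and uses $(\log s)^{\pi(s)}\ll_\eps(1+\eps/2)^s$ from the prime number theorem. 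This is short and exactly captures the balanced case where your $2^B$ bound happens to be tight; the point is that AM--GM applied to $b_i\log q_i$ already handles the unbalanced case automatically, whereas bounding $d(n)$ by $2^B$ throws away all the multiplicative structure.
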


\begin{proof}
 Write the prime factorization of $n$ as $n=q_1^{a_1} \cdots q_s^{a_s}$, where
$q_1 < \cdots < q_s$.  Using the arithmetic mean - geometric mean inequality
and that $q_i\ge p_i$, we have
\[
 d(n)\le \prod_{i=1}^s (2a_i) \le 2^s\prod_{i=1}^s (a_i\log q_i) \prod_{i=1}^s
(\log p_i)^{-1}
\le \pfrac{2\log n}{s}^s \frac{(\log s)^{\pi(s)-s}}{\log 2},
\]
the last inequality coming from excluding factors corresponding to $3\le p_i<s$.
Finally, the prime number theorem implies $(\log s)^{\pi(s)} \le (\log s)^{O(s/\log s)}
\ll_\eps (1+\eps/2)^s$.
\end{proof}

{\bf Remark.} Lemma \ref{domega} is fairly sharp. For example, from the 
inequality $s=\omega(n) \le (1+o(1)) \frac{\log n}{\log_2 n}$, and the
observation that $m_1(x)$ is monotonically increasing,
we immediately obtain Wigert's upper bound for $\log d(n)$.


\medskip

The following is the key lemma, which explains the constant $c$.

\begin{lem}\label{2.4'}
Let $a_1,\ldots,a_t$ be positive integers. 

(a) we have
\[
 \sum_{i=1}^t \log(a_i+1) \le \frac{c}{2} \Bigg( \sum_{i=1}^t ia_i \Bigg)^{1/2}.
\]
Moreover, the constant $c/2$ is best possible.

(b) If $a_i\ge A$ for all $i$, where $A$ is  a positive integer, then
\[
 \sum_{i=1}^t \log(a_i+1) \le \Bigg( \frac{1+\log^2(A+1)}{A} \sum_{i=1}^t ia_i \Bigg)^{1/2}.
\]
\end{lem}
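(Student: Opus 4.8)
The plan is to reduce both parts to a continuous optimization problem. For part (a), the essential idea is that, for fixed $S = \sum_i i a_i$, the sum $\sum_i \log(a_i+1)$ is largest when the $a_i$ are decreasing (since attaching a large exponent to a small index $i$ is "cheap"), and the extremal shape is roughly $a_i \approx$ (something like) $2^{c_0 /\sqrt{S}\cdot(\ldots)}$ — more precisely, one expects the optimizer to satisfy a relation of the form $\log(a_i+1) \asymp$ a function that makes the Lagrange conditions $\frac{1}{a_i+1} = \lambda i$ hold, i.e. $a_i+1 \approx \frac{1}{\lambda i}$. I would first reindex so that $a_1 \ge a_2 \ge \cdots \ge a_t$ (this only helps both sides: it leaves $\sum \log(a_i+1)$ unchanged and does not increase $\sum i a_i$), then pass to the counting variables $y_j = \#\{i : a_i \ge j\}$, so that $\sum_i \log(a_i+1) = \sum_{j\ge 1} y_j \log(1+1/j)$ and $\sum_i i a_i = \sum_j \binom{y_j+1}{2} \ge \tfrac12 \sum_j y_j^2$. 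Now Cauchy–Schwarz gives
\[
\sum_{j\ge 1} y_j \log(1+1/j) \le \Bigl(\sum_{j\ge 1} y_j^2\Bigr)^{1/2}\Bigl(\sum_{j\ge 1}\log^2(1+1/j)\Bigr)^{1/2} \le \sqrt{2}\,\Bigl(\sum_i i a_i\Bigr)^{1/2}\cdot \Bigl(\tfrac{c^2}{8}\Bigr)^{1/2} = \frac{c}{2}\Bigl(\sum_i i a_i\Bigr)^{1/2},
\]
using the definition $c^2 = 8\sum_{j\ge1}\log^2(1+1/j)$. This is exactly the claimed bound. For sharpness, I would exhibit a family mirroring the lower-bound construction in Section \ref{lower}: take $t$ large and $a_i = \lfloor (2^{i/t}-1)^{-1}\rfloor$, so that $y_j \approx \frac{\log(1+1/j)}{\log 2}\, t$, which makes the Cauchy–Schwarz step asymptotically an equality (the vectors $(y_j)$ and $(\log(1+1/j))$ become proportional) and the bound $\sum_j \binom{y_j+1}{2} \ge \tfrac12\sum y_j^2$ asymptotically tight; the computation in \eqref{lower1}–\eqref{lower2} already verifies this.

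For part (b), the constraint $a_i \ge A$ should be exploited exactly the same way, except now the counting variables satisfy $y_1 = y_2 = \cdots = y_A = t$ (every $a_i$ is at least $A$), so the "small $j$" part of both sums is rigid. I would split
\[
\sum_i \log(a_i+1) = \sum_{j=1}^{A} t\,\log(1+1/j) + \sum_{j>A} y_j\log(1+1/j) = t\log(A+1) + \sum_{j>A} y_j \log(1+1/j),
\]
and correspondingly $\sum_i i a_i = \sum_{j=1}^A \binom{t+1}{2} + \sum_{j>A}\binom{y_j+1}{2} \ge A\cdot\tfrac{t^2}{2} + \tfrac12\sum_{j>A} y_j^2$, hence $\sum_{j>A} y_j^2 \le 2\sum_i i a_i - A t^2$. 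Applying Cauchy–Schwarz to the tail, $\sum_{j>A} y_j\log(1+1/j) \le (\sum_{j>A} y_j^2)^{1/2}(\sum_{j>A}\log^2(1+1/j))^{1/2}$, one gets an upper bound of the form $t\log(A+1) + (2S - At^2)^{1/2}\cdot(\text{tail sum})^{1/2}$ where $S = \sum_i i a_i$ and the tail sum $\sum_{j>A}\log^2(1+1/j) \le \sum_{j>A} 1/j^2 \le 1/A$. So the bound becomes $t\log(A+1) + \sqrt{(2S-At^2)/A}$, and I would then maximize the right-hand side over the free parameter $t \in [0, \sqrt{2S/A}]$: elementary calculus (or one more application of Cauchy–Schwarz to the two-term expression, with weights $\log^2(A+1)$ and $1$) yields the maximum $\bigl(\frac{(1+\log^2(A+1))\,S}{A}\bigr)^{1/2}$ at $t = \bigl(\frac{2S}{A}\cdot\frac{\log^2(A+1)}{1+\log^2(A+1)}\bigr)^{1/2}$, which is precisely the asserted inequality.

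The main obstacle is organizing part (b) so that the rigid "head" $j \le A$ and the flexible "tail" $j > A$ are handled with matching bookkeeping: one must be careful that the inequality $\sum_j\binom{y_j+1}{2}\ge\tfrac12\sum_j y_j^2$ is applied in a way that does not waste the exact contribution $A\binom{t+1}{2}$ of the head, and that the final single-variable optimization in $t$ is done cleanly (the cleanest route is to treat $u = t$ and $v = (2S - At^2)^{1/2}/\sqrt{A}$ as satisfying $Au^2 + Av^2 = 2S$ — wait, more simply, $t^2 + (2S-At^2)/A \cdot (A) $ — so I would just write $\log(A+1)\cdot t + 1\cdot w$ with $t^2 + w^2 A \le \ldots$; the details are routine Cauchy–Schwarz). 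The rest is bookkeeping with the identities $\sum_i\log(a_i+1)=\sum_j y_j\log(1+1/j)$ and $\sum_i i a_i=\sum_j\binom{y_j+1}{2}$, which are the same manipulations already used in Section \ref{lower} and so can be cited rather than reproven.
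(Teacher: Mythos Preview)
Your argument is essentially the paper's own proof. Part (a) is identical: sort the $a_i$, pass to the level-counts $y_j$, use $\sum_i ia_i=\sum_j\binom{y_j+1}{2}\ge\tfrac12\sum_j y_j^2$, and apply Cauchy--Schwarz against the weight vector $(\log(1+1/j))_j$; sharpness is witnessed by the same construction \eqref{ai}, \eqref{lower1}--\eqref{lower2}.

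For (b) the paper organizes things slightly more economically than your split-then-optimize route: since $y_1=\cdots=y_A$, it rewrites the head as $\frac{\log(A+1)}{A}(y_1+\cdots+y_A)$ and then applies a \emph{single} Cauchy--Schwarz to the full sum $\sum_{j\ge1} y_j w_j$ with weights $w_j=\log(A+1)/A$ for $j\le A$ and $w_j=\log(1+1/j)$ for $j>A$. This is equivalent to your two-stage argument (your parenthetical ``one more application of Cauchy--Schwarz to the two-term expression'' is exactly this), just without the intermediate optimization in $t$. One small arithmetical point: carried through carefully, both your computation and the paper's argument actually yield $\bigl(\tfrac{2(1+\log^2(A+1))}{A}\sum_i ia_i\bigr)^{1/2}$, i.e.\ a factor $\sqrt2$ larger than the displayed statement of (b); your claimed maximum drops this $\sqrt2$. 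This is harmless for the application in \eqref{d(N')}, which only uses the shape $\ll \frac{\log(A+1)}{\sqrt A}(\sum ia_i)^{1/2}$.
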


\begin{proof}
(a)  Without loss of generality, suppose $a_1 \ge \cdots \ge a_t$.  Let $y_j=\# \{i: a_i\ge j\}$.
Then
\[
 \sum_{i=1}^t ia_i = \sum_{j\ge 1} \frac{y_j(y_j+1)}{2} \ge \frac12 \sum_{j\ge 1} y_j^2.
\]
By partial summation and the Cauchy-Schwarz inequality,
\be\label{CS}
\begin{split}
 \sum_{i=1}^t \log(a_i+1) = \sum_{j\ge 1} (y_j-y_{j+1})\log(j+1)
&= \sum_{j\ge 1} y_j \log(1+1/j) \\
&\le \Bigg(\sum_{j\ge 1} y_j^2\Bigg)^{1/2} \pfrac{c^2}{8}^{1/2}.
\end{split}
\ee
Moreover, the inequality in \eqref{CS} is an equality if and only if for some real $Y$, $y_j=Y\log(1+1/j)$ for 
every $j$.  As the $y_j$ are integers, this cannot happen.  However, we can come
very close to equality in \eqref{CS} by taking $t$ large and choosing the $a_i$ by \eqref{ai},
so that $y_j=\lfloor \frac{\log(1+1/j)}{\log 2} t \rfloor$.
By \eqref{lower1} and \eqref{lower2}, we have in this case
\[
 \sum_{i=1}^t \log(a_i+1) = \frac{c^2}{8\log 2}t + O(\log t), \qquad
\sum_{i=1}^t ia_i = \frac{c^2}{16(\log 2)^2}t^2+O(t\log t),
\]
whence
\[
 \sum_{i=1}^t \log(a_i+1) = \frac{c}{2} \(1 + O\pfrac{\log t}{t} \) \(\sum_{i=1}^t ia_i
\)^{1/2}. 
\]

(b) Observe that $y_1=y_2=\cdots=y_A$.  Arguing similarly to \eqref{CS}, we obtain
\begin{align*}
\sum_{i=1}^t \log(a_i+1) &= \frac{\log(A+1)}{A}(y_1+\cdots +y_A)+\sum_{j>A} y_j \log(1+1/j) \\
&\le \Bigg(\sum_{j\ge 1} y_j^2\Bigg)^{1/2} \( A \pfrac{\log (A+1)}{A}^2 + \sum_{j>A}
\log^2(1+1/j) \)^{1/2}.
\end{align*}
Observing that $\log(1+1/j) < 1/j$ and $\sum_{j>A} 1/j^2 < 1/A$, we obtain (b).
\end{proof}

The next lemma is trivial.

\begin{lem}\label{sump}
 For any positive integer $m$, $m\ge \sum_{p|m} p$.
\end{lem}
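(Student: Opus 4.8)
The plan is to reduce the statement immediately to an inequality about the squarefree kernel of $m$, and then dispatch that inequality with a one-line estimate. First I would write $m=p_1^{a_1}\cdots p_k^{a_k}$, where $p_1<\cdots<p_k$ are the distinct primes dividing $m$ and each $a_i\ge 1$. Then $m\ge p_1p_2\cdots p_k$, so it suffices to prove $p_1\cdots p_k\ge p_1+\cdots+p_k$. The case $m=1$ (i.e. $k=0$) is trivial, since the sum on the right is empty and the left side is the empty product $1\ge 0$.

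For the remaining inequality I would bound the sum from above and the product from below. Since $p_k$ is the largest of the $k$ primes, $\sum_{i=1}^k p_i\le k\,p_k$. On the other hand, each $p_i\ge 2$, so $\prod_{i=1}^{k-1}p_i\ge 2^{k-1}$, and hence $\prod_{i=1}^k p_i\ge 2^{k-1}p_k$. Putting these together, it is enough to know that $2^{k-1}\ge k$ for every $k\ge 1$, which follows by induction on $k$: the base case $k=1$ gives $1\ge 1$, and the step follows from $2^{k}=2\cdot 2^{k-1}\ge 2k\ge k+1$.

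There is essentially no obstacle; the only point requiring a moment's care is assembling the chain of inequalities in the correct direction, namely
\[
\sum_{p\mid m}p\ \le\ k\,p_k\ \le\ 2^{k-1}p_k\ \le\ \prod_{i=1}^{k}p_i\ \le\ m,
\]
and treating the edge case $m=1$ separately. This is exactly why the lemma is labelled trivial.
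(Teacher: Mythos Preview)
Your argument is correct. The paper itself gives no proof of this lemma at all; it simply declares ``The next lemma is trivial'' and moves on. Your reduction to the squarefree kernel and the chain
\[
\sum_{p\mid m}p\ \le\ k\,p_k\ \le\ 2^{k-1}p_k\ \le\ \prod_{i=1}^{k}p_i\ \le\ m
\]
is a perfectly valid way to spell out the triviality, and your handling of the edge case $m=1$ is fine.
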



\begin{proof}[Proof of Theorem \ref{m2}, upper bound]
Let $n$ be large, let $N=d(n)$ and factor $N=N'N''$, where
\[
 N'=u_1^{b_1}\cdots u_w^{b_w}, \qquad N''=q_1^{a_1}\cdots q_s^{a_s},
\]
where $u_1<\cdots<u_w$, $q_1<\cdots<q_s$ are primes, $b_i>(\log_2 n)^6$ for every $i$
and $a_i\le (\log_2 n)^6$ for every $i$. 

Write $m_{N'}=p_1^{\b_1} \cdots p_h^{\b_h}$.   By Lemma \ref{2.2'} (ii),
$m_{N'} \le m_N \le n$, so that $h\ll \log n$.  By Lemma \ref{2.2'} (iii),
$\Omega(\b_i+1) \ll \log_2 h \ll \log_3 n$ for every $i$.  Since $d(m_{N'})=(\beta_1+1)\cdots
(\beta_h+1)=N'$, for each $j\le h$ there
are $\gg \frac{b_j}{\log_3 n}$ values of $i$ for which $u_j|(\b_i+1)$.  Thus,
by Lemma \ref{sump},
\begin{align*}
\log n\ge \log m_{N'} \ge (\log 2)\sum_{i=1}^h \b_i &\ge \frac{\log 2}{2} \sum_{i=1}^h (\b_i+1)\\
&\ge \frac{\log 2}{2} \sum_{i=1}^h \sum_{p|(\b_i+1)} p \gg \sum_{j=1}^w u_j
\frac{b_j}{\log_3 n} \ge \frac{1}{\log_3 n} \sum_{j=1}^w j b_j.
\end{align*}
Combining this estimate with
Lemma \ref{2.4'} (b) with $A=(\log_2 n)^6$ gives
\be\label{d(N')}
\log d(N')=\sum_{j=1}^w \log(b_j+1) \ll \frac{\log_3 n}{(\log_2 n)^3} \( 
\sum_{j=1}^w jb_j \)^{1/2} \ll \frac{(\log n)^{1/2} (\log_3 n)^{3/2}}{(\log_2 n)^3}.
\ee

Next, we bound $d(N'')$.\\
  Case 1)
If $s\le \frac{(\log n)^{1/2}}{(\log_2 n)^3}$, Lemma
\ref{domega} implies that 
$\log d(N'') \ll \frac{(\log n)^{1/2}}{(\log_2 n)^2}$.\\
Case 2) Now suppose that $s > \frac{(\log n)^{1/2}}{(\log_2 n)^3}$.
Write $m_{N''}=p_1^{\a_1} \cdots p_r^{\a_r}$.
By Lemma \ref{2.2'} (iii),
\be\label{OmegaN''}
r \le \Omega(N'')=\sum_{j=1}^s a_j=\sum_{i=1}^r \Omega(\alpha_i+1)
 \le r + \sum_{k\ge 2} \pi(p_{r+1}^{1/2^k}) =r + O((r/\log r)^{1/2}).
\ee
In particular, $r + O((r/\log r)^{1/2}) \ge a_1+\cdots+a_s \ge s$, so $r\gg s> \frac{(\log n)^{1/2}}{(\log_2 n)^3}$.
Thus, for large enough $n$, $a_1+\cdots+a_s\le r +\sqrt{r}$.
Also by Lemma \ref{2.2'} (iii), $\a_j+1$ is prime for $j>\sqrt{r}$.
Let $\eps = 20 \frac{\log_3 n}{\log_2 n}$.
 By the lower bound on $s$, and using $a_i\leq (\log_2 n)^6$,
\be\label{bigl}
 \sum_{j>s-s^{1-\eps}} a_j \ge s^{1-\eps} \ge 2\(s(\log_2 n)^6\)^{1/2} \ge 
2\(\Omega(N'')\)^{1/2} \ge 2\sqrt{r},
\ee
hence, using \eqref{OmegaN''},
\[
 \sum_{j\le s-s^{1-\eps}} a_j \le \Omega(N'')-2\sqrt{r} \le r - \sqrt{r}.
\]
Using Lemma \ref{2.2'} (i), $\alpha_i+1=q_1$ for $r-a_1<i\le r$, and similarly
for each $j\le s-s^{1-\eps}$, $\alpha_i+1=q_j$ for $r-(a_1+\cdots+a_j)<i\le r-(a_1+\cdots
+a_{j-1})$.  We obtain
\begin{align*}
 \log m_{N''} &\ge \sum_{\sqrt{r}<i\le r} \a_i\log p_i \ge \sum_{j\le s-s^{1-\eps}}
(q_j-1) \sum_{m=r-(a_1+\cdots+a_j)+1}^{r-(a_1+\cdots+a_{j-1})} \log p_m \\
&\ge \sum_{j\le s-s^{1-\eps}} (p_j-1) a_j \log (r-(a_1+\cdots+a_j)).
\end{align*}
By \eqref{bigl}, uniformly for  $j\le s-s^{1-\eps}$ we have
\[
 r-(a_1+\cdots+a_j)=r-\Omega(N'')+a_{j+1}+\cdots+a_s \ge
s-j-\sqrt{r} \ge \frac12 s^{1-\eps}.
\]
Using \eqref{pj}, $p_j \ge j\log j+1$ for large $j$. Hence, by Lemma \ref{2.2'} (ii),
\begin{align*}
\log n \ge \log m_{N''} &\ge \sum_{s^{1-\eps}\le j\le s-s^{1-\eps}} (j\log j)a_j(\log s + O(\log_3 n)) \\
&\ge (1+O(\eps))\frac{(\log_2 n)^2}{4} \sum_{s^{1-\eps} \le j\le s-s^{1-\eps}} j a_j.
\end{align*}
By the definition of $\eps$, $s^{\eps} \gg (\log_2 n)^9$.
Also, trivially $\sum_{j=1}^s ja_j \ge 1+2+\cdots +s \ge \frac12 s^2$.  Recalling
that $a_j \le (\log_2 n)^6$ for every $j$, we have
\begin{align*}
\sum_{s^{1-\eps} \le j\le s-s^{1-\eps}} j a_j &=
\sum_{j=1}^s ja_j + O\( s^{2-\eps}(\log_2 n)^6\) = \sum_{j=1}^s ja_j + O(s^2 (\log_2 n)^{-3})\\
&=(1+O(1/\log_2 n)) \sum_{j=1}^s ja_j.
\end{align*}
Combining the last two inequalities gives
\[
 \log n \ge \(1+O\pfrac{\log_3 n}{\log_2 n}\) \frac{(\log_2 n)^2}{4} \sum_{j=1}^s ja_j.
\]
Applying Lemma \ref{2.4'} (a), we conclude that
\be\label{dN''}
\log d(N'') = \sum_{j=1}^s \log (a_j+1) \le \frac{c}{2} \Big(\sum_{j=1}^s ja_j \Big)^{1/2}
\le c \frac{\sqrt{\log n}}{\log_2 n}\(1 + O\pfrac{\log_3 n}{\log_2 n} \).
\ee
Recall that we have a smaller upper bound for $\log d(N'')$ in case 1).  
Finally, using $d(d(n))=d(N')d(N'')$
and combining \eqref{d(N')} and \eqref{dN''}, we obtain the desired upper bound for
$d(d(n))$.
\end{proof}


\section{Proof of Theorem~\ref{wn}}

\begin{proof}[Proof of Theorem \ref{wn}]
For the lower bound, let $x$ be large and 
put $n=\prod_{i=1}^s p_i^{p_i-1}$, where $s$ is the
largest integer such that $n\le x$.   Recall that $p_j$ is the $j$-th smallest prime.
Then
$d(n)=\prod_{i=1}^s p_i$, thus $\omega(d(n))=s$.  By \eqref{pj},
\[
\log n = \sum_{i=1}^s (p_i-1)\log p_i = \sum_{i=1}^s i\log^2 i + O(i\log i\log_2 i)
= \frac{1}{2}s^2\log^2 s + O(s^2\log s\log_2 s).
\]
Solving for $s$ gives $s = \frac{\sqrt{8\log n}}{\log_2 n}+O(\frac{\sqrt{\log n}\log_3 n}{\log_2^2 n})$.
We now prove a lower bound on $n$.  Since $p_{s+1} \sim s\log s\sim \sqrt{2\log n} \ll
\sqrt{\log x}$ by \eqref{pj}, we have
\[
 x \ge n \ge x p_{s+1}^{-p_{s+1}} = x \exp \( - O\( \sqrt{\log x}\log_2 x \)\).
\]
That is, $\log n = \log x + O(\sqrt{\log x}\log_2 x)$.
Therefore, $s = \frac{\sqrt{8\log x}}{\log_2 x}+O(\frac{\sqrt{\log x}\log_3 x}{\log_2^2 x})$.

Now let $n$ be a large, positive integer factored as
$n = n_1 n_2$, $n_1=\prod_{i=1}^r q_i^{a_i}$, $n_2= \prod_{i=1}^{r'} (q_i')^{a_i'}$,
where $q_i,q_i'$ are primes, $q_i>P$ and $q_i'\le P$ for each $i$, where
$P=\frac{\sqrt{\log n}}{\log_2 n}$.  We have 
\be\label{omega12}
\omega(d(n)) \le \omega(d(n_1)) + \omega(d(n_2)).            
\ee
Since $\omega(n_2)\le \pi(P) \ll \frac{\sqrt{\log n}}{(\log_2 n)^2}$, 
Lemma \ref{domega} implies $\log d(n_2) \ll \sqrt{\log n}/\log_2 n$.
Applying the elementary inequality $\omega(u)\ll \frac{\log u}{\log_2 u}$ gives
\be\label{wn2}
\omega(d(n_2)) \ll \frac{\sqrt{\log n}}{(\log_2 n)^2}.
\ee
Next,
\[
\log n_1 \geq (\log P) \sum_{i=1}^r a_i =
\( \frac{\log_2 n}{2} - \log_3 n \) \sum_{i=1}^r a_i.
\]
Letting $s=\omega(d(n_1))=\omega(\prod (a_i+1))$, Lemma \ref{sump}
implies that
\[
\sum_{i=1}^r a_i \ge \sum_{i=1}^r \sum_{p|(a_i+1)} (p-1) \ge \sum_{i=1}^{s} (p_i-1) \ge
\sum_{i=1}^{s} (i\log i + O(1))  = \frac{1}{2}{s}^2\log s + O(s^2).
\]
Here we used the one-sided inequality $p_i \ge i\log i + O(1)$ deduced from \eqref{pj}.
Thus,
\[
\log n \ge \log n_1 \ge \(\frac{1}{4}+O\pfrac{\log_3 n}{\log_2 n}\)(\log_2 n) {s}^2\log s
 + O({s}^2\log_2 n).
\]
Consider two cases: (i) $s \le  \frac{\sqrt{\log n}}{\log_2 n}$, (ii)
$s > \frac{\sqrt{\log n}}{\log_2 n}$.  In case (ii), we have
$\frac{\log n}{\log_2^2 n} \geq (\frac{1}{8}+O(\frac{\log_3
  n}{\log_2 n})){s}^2$, and we obtain in both cases
\[
\omega(d(n_1)) = s \le \frac{\sqrt{8\log n}}{\log_2 n} + 
O\pfrac{\sqrt{\log n}\log_3 n}{\log_2^2 n},
\]
Combining this inequality with \eqref{omega12} and \eqref{wn2}, 
we obtain the desired upper bound for $\omega(d(n))$.
\end{proof}

\medskip

{\bf Acknowledgement.}
The authors would like to thank Prof. L.~G.~Lucht for discussions
on the subject and helpful comments on an earlier version of this paper.


\end{document}